\newtheorem{thm}{Theorem}[section]
\newtheorem{cor}[thm]{Corollary}
\theoremstyle{definition}
\theoremstyle{remark}
\begin{document}

\title[Fixed point theorem]{Fixed point theorem for an infinite Toeplitz matrix and its extension to general infinite matrices}%
\author{Vyacheslav M. Abramov}%
\address{24 Sagan Drive, Cranbourne North, Victoria, 3977, Australia}%

\email{vabramov126@gmail.com}%

\subjclass{46B99; 47L07; 15B05}%
\keywords{Banach space; fixed point theorem; Krasnosel'skii's fixed point theorem; operator equation; Toeplitz matrix}
\begin{abstract}
In [V. M. Abramov, \emph{Bull. Aust. Math. Soc.} \textbf{104} (2021), 108--117] the fixed point equation for an infinite nonnegative particular Toeplitz matrix has been studied.
In the present paper, we provide an alternative proof  for the existence of a positive solution in general case. The presented proof is based on an application of a variant of the fixed point theorem of M. A. Krasnosel'skii.
The results are then extended for the equations with infinite matrices of a general type.
\end{abstract}
\maketitle

\section{Introduction}\label{S1}
Let $\boldsymbol{x}=A\boldsymbol{x}$, where $A$ is an infinite matrix with nonnegative entries, and $\boldsymbol{x}$ is an unknown vector-column, the entries of which are denoted $x_0, x_1, \ldots$. By positive solution of the aforementioned matrix equation we mean such a vector $\boldsymbol{x}$, the entries of which satisfy the condition $x_i\geq0$, $i=0,1,\ldots$ and $\sum_{i=0}^{\infty}x_i>0$. Fixed point matrix equations of the form $\boldsymbol{x}=A\boldsymbol{x}$ or $\boldsymbol{x}=A\boldsymbol{x}+\boldsymbol{f}$ have wide application in economics. They describe a
quantitative economic model for the interdependencies between different sectors of a
national economy or different regional economies \cite{L}.
All these models are typically studied under the assumption that $\|A\|<1$, and their analysis uses principle of contraction mapping and iterative numerical procedures \cite{Kr_et_al}. The detailed study of linear systems can also be found in \cite{KLS}. For the infinite matrices, normed sequence spaces and
related topics, the reader can refer to the textbook \cite{FB}.

However, in a large number of problems in the areas of stochastic processes and applied probability (e.g. \cite{A2, T1, T2}) it is required to study the solutions of fixed point matrix equations or convolution type recurrence relations, in which the assumption $\|A\|<1$ becomes insufficient. The aim of the present paper is to find the conditions for a quite general class of infinite matrices (not necessarily obeying $\|A\|<1$), under which the equation $\boldsymbol{x}=A\boldsymbol{x}$ has a positive solution. Our result is first demonstrated on the equation $\boldsymbol{x}=T\boldsymbol{x}$ with the infinite Toeplitz matrix with nonnegative entries. Then it is reformulated and proved for the equations with infinite matrices of general type, for which the basic details of the proof remain unchanged.

The main novelty of the present paper is an analysis of the case $\|A\|>1$ for linear operator equations that previously has not been considered in the literature. The fixed point equations for nonlinear expansive operators has been earlier considered in \cite{WW, XY} and other papers.

\smallskip
Consider the equation
\begin{equation}\label{7}
\boldsymbol{x}=T\boldsymbol{x},
\end{equation}
 where
\begin{equation}\label{8}
T=\left(\begin{array}{cccccccc}t_0 &t_{-1} &t_{-2} &\cdots &t_{-n} &t_{-n-1} &\cdots\\
t_1 &t_0 &t_{-1} &\cdots &t_{-n+1} &t_{-n} &\cdots\\
t_2 &t_1 &t_0 &\cdots &t_{-n+2} &t_{-n+1} &\cdots\\
\vdots &\vdots &\vdots &\cdots &\vdots &\vdots &\cdots\\
t_{n} &t_{n-1} &t_{n-2} &\cdots &t_0 &t_{-1} &\cdots\\
\vdots &\vdots &\vdots &\cdots &\vdots &\vdots &\ddots
\end{array}\right),
\end{equation}
is an infinite Toeplitz matrix with nonnegative entries. Equation \eqref{7} with the matrix \eqref{8} has been studied in \cite{A}. To initiate our study in the present paper, we need to recall the main theorem proved in \cite{A}.

\smallskip
Let $\tau_{-n}(z)=\sum_{i=0}^{\infty}t_{i-n}z^i$, $0\leq z\leq 1$.

\begin{thm}\label{T1}
Assume that $n=\max\{j: t_{-j}>0\}<\infty$, and
\begin{equation}\label{6}
\frac{\mathrm{d}}{\mathrm{d}z}\sqrt[n]{\tau_{-n}(z)} \ \text{increases}.
\end{equation}
\begin{enumerate}
\item [(i)] If $\sum_{i=0}^\infty t_{i-n}>1$, then all positive solutions (if any) are bounded and $\lim_{i\to\infty}x_i=0$.
\item [(ii)] If $\sum_{i=0}^\infty t_{i-n}=1$, then all positive solutions are bounded if and only if $\sum_{i=0}^{\infty}it_{i-n}<n$.
In the case $n=1$, if $\sum_{i=0}^{\infty}it_{i-1}<1$, then $\lim_{i\to\infty}x_i$ exists, and $\lim_{i\to\infty}x_i=\frac{x_0t_{-1}}{1-\sum_{i=1}^{\infty}it_{i-1}}$.
\item [(iii)] If $\sum_{i=0}^\infty t_{i-n}<1$, then any positive solution is unbounded.
\end{enumerate}
\end{thm}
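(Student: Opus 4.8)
The plan is to pass to generating functions. Writing $X(z)=\sum_{i\ge 0}x_iz^i$ and noting that \eqref{7}--\eqref{8} is the coordinatewise system $x_i=\sum_{j=0}^{i+n}t_{i-j}x_j$ (the terms with $j>i+n$ vanish since $t_{-j}=0$ for $j>n$), I would multiply by $z^i$, sum over $i\ge 0$, interchange the two nonnegative sums, and split off the boundary contributions coming from $j<n$. This produces, inside the disc of convergence of $X$, the identity
\[
X(z)\bigl(\tau_{-n}(z)-z^n\bigr)=\pi(z),\qquad \pi(z):=z^{n}\sum_{j=0}^{n-1}x_jz^{j}\sum_{m=-n}^{-j-1}t_mz^{m},
\]
where $\pi$ is a polynomial of degree at most $n-1$ whose coefficients are nonnegative linear combinations of $x_0,\dots,x_{n-1}$ (with weights among $t_{-1},\dots,t_{-n}$); for $n=1$ one simply has $\pi(z)\equiv x_0 t_{-1}$. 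Put $\psi(z):=\tau_{-n}(z)-z^n$, so that $X(z)=\pi(z)/\psi(z)$. A first point I would establish is that $\pi\not\equiv 0$ for every positive solution: if $\pi\equiv 0$, then reading off the coefficients of $1,z,\dots,z^{n-1}$ in $\pi$ and using $t_{-n}>0$ forces $x_0=\dots=x_{n-1}=0$ in turn, and then the recursion $t_{-n}x_{i+n}=x_i-\sum_{j=0}^{i+n-1}t_{i-j}x_j$, which expresses any solution through $x_0,\dots,x_{n-1}$, gives $\boldsymbol x=\boldsymbol 0$, contrary to $\sum_i x_i>0$. Since $\pi$ has nonnegative coefficients and is not identically zero, $\pi(z)>0$ for all $z>0$.

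Next I would apply Pringsheim's theorem. The series $X$ has nonnegative coefficients, so its radius of convergence $\rho$ is a singularity of $X$; as $\pi$ never vanishes on $(0,\infty)$, this forces $\rho$ to equal the smallest positive zero of $\psi$ (or the radius of convergence of $\tau_{-n}$, which is at least $1$ in all three cases, if $\psi$ has no positive zero before it), and $z=\rho$ is then a genuine pole of $X$ whose order equals the multiplicity of that zero. A routine singularity analysis of $X=\pi/\psi$ then reads off the asymptotics of $x_i$: if $\rho>1$ then $x_i\to 0$ geometrically fast; if $\rho=1$ and $z=1$ is a simple zero of $\psi$ then $x_i\to -\pi(1)/\psi'(1)=\pi(1)/\bigl(n-\tau_{-n}'(1)\bigr)$, which for $n=1$ is exactly $x_0 t_{-1}/\bigl(1-\sum_{i\ge 1} i t_{i-1}\bigr)$; and if $\rho<1$, or $\rho=1$ with $z=1$ a zero of $\psi$ of order $\ge 2$, then $x_i$ is unbounded. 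The only delicate boundary case is $\rho=1$ with $\psi(1)\ne 0$, possible only when $\tau_{-n}$ has radius of convergence exactly $1$; then the singularity of $X$ at $1$ is inherited from $\tau_{-n}$, and since $\sum_i t_{i-n}<\infty$ one again gets $x_i\to 0$.

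It remains to locate the zeros of $\psi$ in $[0,1]$, and this is where \eqref{6} enters. Writing $g:=\sqrt[n]{\tau_{-n}}$, for $z>0$ the sign of $\psi(z)=g(z)^n-z^n$ is that of $g(z)-z$, so the positive zeros of $\psi$ are the fixed points of $g$. By \eqref{6}, $g$ is convex on $[0,1]$; also $g(0)=\sqrt[n]{t_{-n}}>0$, so $h:=g-\mathrm{id}$ is convex with $h(0)>0$ and has at most two zeros in $[0,1]$ (its zero set is an interval, and $h\not\equiv 0$ because $\tau_{-n}(z)\ne z^n$ when $n\ge 1$). In case (iii), $h(1)=\sqrt[n]{\tau_{-n}(1)}-1<0$, so by the intermediate value theorem $\psi$ has a zero in $(0,1)$, necessarily its smallest positive one; hence $\rho<1$ and every positive solution is unbounded (this part needs only $h(0)>0>h(1)$). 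In case (ii), $h(1)=0$ and $h'(1)=g'(1)-1=\tfrac1n\tau_{-n}'(1)-1$: when $\sum_i i t_{i-n}=\tau_{-n}'(1)<n$ we get $h'(1)<0$, and convexity with $h(0)>0$ then forces $h>0$ on $[0,1)$ (otherwise $\{h\le 0\}=[a,1]$ with $a>0$, forcing $h'(1)\ge 0$, a contradiction), so $z=1$ is a simple zero of $\psi$ and $x_i$ tends to the positive limit above; when $\tau_{-n}'(1)=n$ the line $y=z$ is tangent to $g$ at $1$, so $h\ge 0$ and $z=1$ is a zero of $\psi$ of order $\ge 2$, giving $x_i$ unbounded (at least linear growth); and when $\tau_{-n}'(1)>n$ we have $h'(1)>0$, which forces a zero of $h$ in $(0,1)$, so $\rho<1$ and $x_i$ is unbounded. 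This yields the equivalence in (ii).

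The remaining case (i), where $h(0)>0$ and $h(1)=\sqrt[n]{\tau_{-n}(1)}-1>0$, is the one I expect to be the main obstacle. Here the conclusion reduces to showing that $\psi$ has no zero in $[0,1)$, so that $\rho>1$ (or $\rho=1$ in the harmless boundary case) and hence $x_i\to 0$; but a convex function positive at both endpoints of $[0,1]$ can a priori dip to a nonpositive value in between, so convexity of $g$ does not by itself preclude a growing mode. Settling this step is precisely where \eqref{6} must be used most carefully, in combination with the hypothesis $\tau_{-n}(1)>1$ and with the nonnegativity of the Taylor coefficients of $\tau_{-n}=g^n$ (controlling, for instance, the admissible profile of $g$ near $0$ and the magnitude of $\tau_{-n}'(1)$). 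Once this is done, assembling the three cases — together with the routine verifications of the summation interchange and of the boundary case $\rho=1$, $\psi(1)\ne 0$ — completes the proof.
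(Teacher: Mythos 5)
Your route is genuinely different from anything in the paper: the paper does not actually prove Theorem \ref{T1} here (it is imported from \cite{A}), and the only argument it offers is the sketch in Comment (C2), which sums the coordinate identities \eqref{0} over $j$. Your generating-function identity $X(z)\left(\tau_{-n}(z)-z^n\right)=\pi(z)$ is correct, as is the check that $\pi\not\equiv 0$ for a positive solution; case (iii) and both directions of case (ii) then come out essentially as you describe. Two of the steps you call ``routine'' do need real work, though: to pass from a simple pole of $\pi/\psi$ at $z=1$ to ``$x_i$ bounded'' or ``$x_i\to c$'' you must also control the other zeros of $\psi$ on the circle $|z|=1$ (they occur in lattice cases, and are simple precisely because $\tau_{-n}'(1)<n$), and you must handle the case where $\tau_{-n}$ has radius of convergence exactly $1$, where meromorphic continuation past the circle is unavailable and a renewal-type Tauberian argument is required. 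Both of these are repairable.

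The genuine gap is case (i), and you have located it exactly but cannot close it --- nor can anyone, because claim (i) is false as stated. Take $n=1$, $t_{-1}=\tfrac{1}{10}$, $t_{9}=\tfrac{91}{100}$, and all other $t_k=0$. Then $\sum_i t_{i-1}=1.01>1$ and $\tau_{-1}(z)=\tfrac{1}{10}+\tfrac{91}{100}z^{10}$ has increasing derivative $9.1z^9$, so \eqref{6} holds; yet $\psi(\tfrac12)=0.1+\tfrac{0.91}{1024}-0.5<0$, so $\psi$ has a root $z_0$ slightly above $\tfrac1{10}$. The solution generated by $x_0=1$, namely $x_{j+1}=10x_j$ for $j<9$ and $x_{j+1}=10x_j-9.1x_{j-9}$ thereafter, satisfies $x_{j+1}\ge 5x_j>0$ by an easy induction (since $x_{j-9}\le 5^{-9}x_j$), so it is a positive solution and it is unbounded, consistent with your dictionary $\rho=z_0<1$. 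The same example defeats the summation argument of Comment (C2): the summed inequality has the wrong sign because $t_0+\sum_{i=1}^{n}t_{-i}<1$. So the step you flagged as ``the main obstacle'' --- showing $\psi>0$ on $(0,1)$ from $\psi(0)>0$, $\psi(1)>0$ and convexity of $\sqrt[n]{\tau_{-n}}$ --- genuinely fails, and the correct outcome of your analysis is that (i) requires an additional hypothesis; for instance $\sum_{i=0}^{n}t_{i-n}\ge 1$ gives $\tau_{-n}(z)\ge z^n\sum_{i=0}^{n}t_{i-n}\ge z^n$ with strict inequality on $(0,1]$ under the case (i) assumption, after which your singularity argument closes immediately.
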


We have the following important comments on this theorem.
\begin{enumerate}
\item [(C1)] The condition $n=\max\{j: t_{-j}>0\}<\infty$ means that we deal with
\begin{equation}\label{9}
T=\left(\begin{array}{ccccccccc}t_0 &t_{-1} &t_{-2} &\cdots &t_{-n} &0 &0 &\cdots\\
t_1 &t_0 &t_{-1} &\cdots &t_{-n+1} &t_{-n} &0 &\cdots\\
t_2 &t_1 &t_0 &\cdots &t_{-n+2} &t_{-n+1} &t_{-n} &\cdots\\
\vdots &\vdots &\vdots &\cdots &\vdots &\vdots &\vdots &\cdots\\
t_{n} &t_{n-1} &t_{n-2} &\cdots &t_0 &t_{-1} &t_{-2} &\cdots\\
\vdots &\vdots &\vdots &\cdots &\vdots &\vdots &\vdots &\ddots
\end{array}\right),
\end{equation}
$t_{-n}>0$.
\smallskip
\item [(C2)] Claim (i) of Theorem \ref{T1} is true without technical condition \eqref{6}, since from the infinite system of equations
\begin{equation}\label{0}
0=\underbrace{(t_0-1)x_j}_{\text{negative}}+\sum_{i=1}^{j}t_ix_{j-i}+\sum_{i=1}^{n}t_{-i}x_{j+i}, \quad j=0,1,\ldots,
\end{equation}
under the assumption $\sum_{i=0}^{\infty}t_{i-n}>1$, we arrive at $\sum_{i=0}^{\infty}x_i<\infty$. (In other words, the proof of Theorem \ref{T1}(i) does not require technical assumption \eqref{6}. The aforementioned technical assumption is used in the proof of Theorem \ref{T1}(ii) only.)
\smallskip
\item[(C3)] The statements of Theorem \ref{T1} admit the extension for $n$ increasing to infinity in a straightforward way.
\smallskip
\item[(C4)] If \eqref{7} with the matrix $T$ defined by \eqref{9} or \eqref{8} has a positive solution, then there are infinitely many different positive solutions in general. For instance, if $\boldsymbol{x}^*$ is a positive solution, then $c\boldsymbol{x}^*$ is also a positive solution for any $c>0$. However, if $x_0$, $x_1$,\ldots, $x_{n-1}$ are fixed, then the solution found by the recurrence relation is, of course, unique.
\smallskip
\item[(C5)] If $\boldsymbol{x}$ is a positive solution of \eqref{7}, then every $x_i$, $i=0,1,\ldots$, must be strictly positive. This follows directly from representation \eqref{0}, where every term $x_it_{-n}$ with $t_{-n}>0$ can be expressed explicitly via the other terms, the sum of which is positive. If one of $x_i$'s is set to zero, then it turns out that all $x_i$, $i=0,1,\ldots$, are equal to zero, and we arrive at the contradiction.
\smallskip \item[(C6)]
Comment (C5) remains true in the situation when the assumption $n=\max\{j: t_{-j}>0\}<\infty$ is not satisfied, since in this case we have the similar to \eqref{0} system of the equations:
    \[
    0=\underbrace{(t_0-1)x_j}_{\text{negative}}+\sum_{i=1}^{j}t_ix_{j-i}+\sum_{i=1}^{\infty}t_{-i}x_{j+i}, \quad j=0,1,\ldots,
    \]
\end{enumerate}

On the basis of comments (C2) and (C3) we have the following extension of Theorem \ref{T1}(i).
\begin{cor}\label{C1}
For equation \eqref{7} with the matrix $T$ defined by \eqref{8} the following statement is true.
Suppose that $\sum_{i=-\infty}^{\infty}t_i>1$. If a positive solution of \eqref{7} exists, then $\sum_{i=0}^{\infty}x_i<\infty$.
\end{cor}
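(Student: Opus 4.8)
The plan is to run the argument behind Theorem~\ref{T1}(i) in the full generality sanctioned by comments (C2) and (C3). Let $\boldsymbol{x}$ be a positive solution of \eqref{7} with $T$ given by \eqref{8}. One first checks that $t_0<1$: reading off the first row gives $(1-t_0)x_0=\sum_{i\ge1}t_{-i}x_i\ge0$, which, together with the hypothesis $\sum_{i=-\infty}^{\infty}t_i>1$, is readily seen to be incompatible with $x_0>0$ when $t_0\ge1$, so there would be no positive solution and nothing to prove. By comment (C6) every coordinate $x_i$ is strictly positive and $\boldsymbol{x}$ satisfies, for each $j\ge0$,
\[
0=(t_0-1)x_j+\sum_{i=1}^{j}t_ix_{j-i}+\sum_{i=1}^{\infty}t_{-i}x_{j+i},
\]
the $n=\infty$ counterpart of \eqref{0}.

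The core of the proof is a single summation. Summing the identity $x_j=\sum_{k\ge0}t_{j-k}x_k$ over all $j$ and interchanging the order of summation --- legitimate in $[0,\infty]$ by Tonelli's theorem, since every $t_m$ and every $x_k$ is nonnegative --- one obtains
\[
\sum_{j=0}^{\infty}x_j=\sum_{k=0}^{\infty}x_k\sum_{j=0}^{\infty}t_{j-k}=\sum_{k=0}^{\infty}x_k\beta_k,\qquad \beta_k:=\sum_{m=-k}^{\infty}t_m,
\]
where $\beta_k$ increases to $\sigma:=\sum_{m=-\infty}^{\infty}t_m>1$ and $\sigma-\beta_k=\sum_{m<-k}t_m$ is merely a tail. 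Writing $S=\sum_i x_i$ and $S_{i-1}=\sum_{k=0}^{i-1}x_k$, this rearranges (formally) to $(\sigma-1)S=\sum_{i=1}^{\infty}t_{-i}S_{i-1}$. If one can justify the rearrangement and bound the right-hand side, then $S<\infty$ follows at once because $\sigma-1>0$. This is exactly the computation underlying Theorem~\ref{T1}(i): as recorded in comment (C2) it uses neither the technical hypothesis \eqref{6} nor the finiteness of $n$, and by comment (C3) letting $n\to\infty$ turns the hypothesis $\sum_{i=0}^{\infty}t_{i-n}>1$ of Theorem~\ref{T1}(i) into precisely $\sigma=\sum_{i=-\infty}^{\infty}t_i>1$.

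The two deferred points both reduce to an a priori bound on the growth of $\boldsymbol{x}$, read off once more from the equation. Retaining a single super-diagonal term gives $t_{-N}x_{j+N}\le(1-t_0)x_j$ for any $N$ with $t_{-N}>0$ (some such $N$ exists, otherwise $T$ is lower triangular and, since $t_0<1$, has only the zero solution), so $\boldsymbol{x}$ grows at most geometrically. Suppose first that $n=\max\{j:t_{-j}>0\}<\infty$ and take $N=n$; then $X(z)=\sum_j x_jz^j$ has positive radius of convergence and the equation yields $X(z)\bigl(\tau_{-n}(z)-z^{n}\bigr)=Q(z)$, with $Q$ a polynomial of degree $\le n-1$ whose coefficients are nonnegative combinations of $x_0,\dots,x_{n-1}$ and $t_{-1},\dots,t_{-n}$. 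By Pringsheim's theorem the radius of convergence of $X$ is the least positive zero of $\tau_{-n}(z)-z^{n}$, and the main point to verify --- from $\tau_{-n}(0)=t_{-n}>0$ and $\tau_{-n}(1)-1=\sum_{i=0}^{\infty}t_{i-n}-1>0$ --- is that this zero exceeds $1$; granting that, $z=1$ lies in the disc of convergence, so $S=X(1)<\infty$, the series $\sum_{i=1}^{n}t_{-i}S_{i-1}$ is finite, and the rearrangement of the previous paragraph is justified. This positivity of $\tau_{-n}(z)-z^{n}$ on $(0,1]$, rather than the summation itself, is what I expect to be the main obstacle; the case $n=\infty$ is then handled by truncating \eqref{8} and passing to the limit as in comment (C3).
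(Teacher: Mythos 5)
Your proposal correctly locates where the real work is, but it does not do that work: everything is deferred to the claim that the least positive zero of $\tau_{-n}(z)-z^{n}$ exceeds $1$, which you explicitly leave as "the main obstacle" and then "grant". The preceding Tonelli computation $(\sigma-1)S=\sum_{i\geq1}t_{-i}S_{i-1}$ is, as you half-concede, inconclusive on its own: if $S=\infty$ both sides are $+\infty$ and no contradiction arises, so the summation buys nothing until the Pringsheim step is closed. For comparison, the paper's own "proof" of Corollary \ref{C1} is a two-line appeal to Comments (C2) and (C3), i.e.\ to summing the system \eqref{0}; it supplies no more detail than you do, so your attempt to make the argument rigorous is the right instinct --- but it remains an unproven reduction.

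More seriously, the deferred claim is false under the corollary's stated hypotheses, so this route cannot be completed. Take $n=2$, $t_{-2}=\tfrac{1}{100}$, $t_{4}=\tfrac{11}{10}$ and all other $t_{m}=0$. Then $\sum_{i=-\infty}^{\infty}t_{i}=1.11>1$, yet $\tau_{-2}(z)-z^{2}=\tfrac{1}{100}+\tfrac{11}{10}z^{6}-z^{2}$ is negative at $z=\tfrac12$, so its least positive zero lies below $1$. The system reads $x_{j}=\tfrac{1}{100}x_{j+2}+\tfrac{11}{10}x_{j-4}$ (the second term being absent for $j<4$), and starting from $x_{0}=x_{1}=1$ the recursion $x_{j+2}=100\left(x_{j}-\tfrac{11}{10}x_{j-4}\right)$ yields $x_{2}=x_{3}=100$, $x_{4}=x_{5}=10^{4}$, and by induction $x_{j+2}\geq 99\,x_{j}$ for all $j$; this is a strictly positive solution growing geometrically, with $\sum_{i}x_{i}=\infty$ and radius of convergence of $X$ strictly less than $1$. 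Thus the positivity of $\tau_{-n}(z)-z^{n}$ on $(0,1]$ genuinely requires additional control on the positive-index coefficients (Theorem \ref{T3} assumes $\sum_{i\geq0}t_{i}<1$, which excludes this example, but Corollary \ref{C1} does not restate any such condition). As written, your proposal reduces the corollary to a lemma that is not only unproven but untrue at this level of generality, so the gap is essential rather than technical.
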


Conditions under which a solution of \eqref{7} with the matrix defined by \eqref{9} exists has been discussed in \cite[Section 3.3]{A}. It was shown that a positive solution exists if $\sum_{i=0}^{\infty}t_i<1$. The proof provided there was  entirely straightforward and based on a number of case studies. It cannot admit further extensions for more general types of matrix. In the alternative proof given in this paper for a more general situation, we show that the aforementioned condition can be easily obtained from the fixed point theorem of Krasnosel'skii. The statement is then extended for a general class of infinite matrices.

\smallskip
Below we recall the fixed point theorem of Krasnosel'skii  (see \cite{Burton, Krasnoselskii, Smart}).

\begin{thm} (Krasnosel'skii \cite{Krasnoselskii}) \label{T2}
Let $\mathcal{M}$ be a closed convex nonempty subset of a Banach space ($S, \|\cdot\|$). Suppose that $A$ and $B$ map $\mathcal{M}$ into $S$ such that
\begin{enumerate}
\item [(i)] $A\boldsymbol{x}+B\boldsymbol{y}\in \mathcal{M}$ (\text{for all} \ $\boldsymbol{x}, \boldsymbol{y}\in\mathcal{M}$),
\item [(ii)] $A$ is continuous and $A\mathcal{M}$ is contained in a compact set,
\item [(iii)] $B$ is a contraction mapping with constant $\alpha<1$.
\end{enumerate}
Then there is a vector $\boldsymbol{y}\in\mathcal{M}$ with $A\boldsymbol{y}+B\boldsymbol{y}=\boldsymbol{y}$.
\end{thm}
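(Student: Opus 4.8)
The plan is to deduce Theorem~\ref{T2} from the Banach contraction principle together with Schauder's fixed point theorem, which is the classical route. First, fix an arbitrary $\boldsymbol{y}\in\mathcal{M}$ and look at the map $\boldsymbol{x}\mapsto A\boldsymbol{y}+B\boldsymbol{x}$. Hypothesis~(i) says precisely that this sends $\mathcal{M}$ into $\mathcal{M}$, and hypothesis~(iii) says it is a contraction of $\mathcal{M}$ with constant $\alpha<1$. Since $\mathcal{M}$ is a closed subset of the Banach space $S$, it is complete, so the contraction mapping principle gives a unique $\boldsymbol{x}\in\mathcal{M}$ solving $\boldsymbol{x}=A\boldsymbol{y}+B\boldsymbol{x}$; write $\boldsymbol{x}=J\boldsymbol{y}$, so that $J\colon\mathcal{M}\to\mathcal{M}$ satisfies $J\boldsymbol{y}=A\boldsymbol{y}+B(J\boldsymbol{y})$ for every $\boldsymbol{y}$.

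Next I would verify that $J$ is continuous. Subtracting the defining identities for $\boldsymbol{y}_1,\boldsymbol{y}_2\in\mathcal{M}$ and using that $B$ is $\alpha$-Lipschitz gives $\norm{J\boldsymbol{y}_1-J\boldsymbol{y}_2}\le\norm{A\boldsymbol{y}_1-A\boldsymbol{y}_2}+\alpha\norm{J\boldsymbol{y}_1-J\boldsymbol{y}_2}$, whence $\norm{J\boldsymbol{y}_1-J\boldsymbol{y}_2}\le(1-\alpha)^{-1}\norm{A\boldsymbol{y}_1-A\boldsymbol{y}_2}$. Continuity of $A$ from hypothesis~(ii) then yields continuity of $J$.

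The heart of the argument is to show that $J(\mathcal{M})$ is contained in a compact set, and the key observation is that $I-B$ is a homeomorphism of $S$ onto itself: for each $\boldsymbol{u}\in S$ the map $\boldsymbol{x}\mapsto\boldsymbol{u}+B\boldsymbol{x}$ is a contraction on the complete space $S$, so $(I-B)^{-1}$ is defined on all of $S$, and the same type of estimate as above gives $\norm{(I-B)^{-1}\boldsymbol{u}-(I-B)^{-1}\boldsymbol{v}}\le(1-\alpha)^{-1}\norm{\boldsymbol{u}-\boldsymbol{v}}$, so it is continuous. Rewriting $J\boldsymbol{y}=A\boldsymbol{y}+B(J\boldsymbol{y})$ as $(I-B)(J\boldsymbol{y})=A\boldsymbol{y}$ gives $J\boldsymbol{y}=(I-B)^{-1}(A\boldsymbol{y})$, hence $J(\mathcal{M})\subseteq(I-B)^{-1}\big(\overline{A\mathcal{M}}\big)$; since $\overline{A\mathcal{M}}$ is compact by hypothesis~(ii) and $(I-B)^{-1}$ is continuous, this last set is compact, so $J(\mathcal{M})$ is relatively compact.

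Finally, $J$ is a continuous self-map of the nonempty closed convex set $\mathcal{M}$ whose range is relatively compact, so Schauder's fixed point theorem produces $\boldsymbol{y}\in\mathcal{M}$ with $J\boldsymbol{y}=\boldsymbol{y}$; by the construction of $J$ this means $\boldsymbol{y}=A\boldsymbol{y}+B\boldsymbol{y}$, as required. I expect the only real obstacle to be the compactness step — more precisely, hitting on the idea of inverting $I-B$ instead of trying to control $B(J\boldsymbol{y})$ directly — everything else being routine manipulation of the contraction constant.
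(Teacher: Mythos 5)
The paper does not prove this statement at all: Theorem~\ref{T2} is quoted as a known classical result, with the proof delegated to the cited references (Krasnosel'skii, Smart, Burton), so there is no in-paper argument to compare yours against. Your proof is the standard textbook derivation from the Banach contraction principle plus Schauder's theorem, and it is essentially correct. One point deserves care: as stated, $B$ maps only $\mathcal{M}$ into $S$, not $S$ into $S$, so the assertion that ``$I-B$ is a homeomorphism of $S$ onto itself'' is not literally available --- you cannot run the contraction argument for $\boldsymbol{x}\mapsto\boldsymbol{u}+B\boldsymbol{x}$ on all of $S$. Fortunately this detour is unnecessary: hypothesis~(i) already guarantees $A\mathcal{M}\subseteq(I-B)(\mathcal{M})$, and the Lipschitz estimate you derived for continuity, $\norm{J\boldsymbol{y}_1-J\boldsymbol{y}_2}\le(1-\alpha)^{-1}\norm{A\boldsymbol{y}_1-A\boldsymbol{y}_2}$, shows directly that $J(\mathcal{M})$ is the image of the totally bounded set $A\mathcal{M}$ under a Lipschitz map, hence totally bounded, hence relatively compact in the complete space $S$ with closure inside the closed set $\mathcal{M}$. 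With that substitution for the compactness step, Schauder applies and the argument is complete.
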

The fixed point theorem of Krasnosel'skii was originated for the problems of nonlinear analysis, and all its applications are related to nonlinear problems (e.g. \cite{BF, LL, SS, SSB}). There is a number of interesting development of this theorem, where some conditions of the theorem have been relaxed (see \cite{LL, XY2, XY3}). An application of Krasnosel'skii fixed point theorem in the theory of fractional calculus for non-local fractional delay differential systems of order $1<r<2$ in Banach spaces has been provided in \cite{WVUN}.

The applications provided in the present paper are not standard and typical for this theorem. In addition, as it has been mentioned in \cite{Burton}, Krasnosel'skii's theorem in this form is hardly applicable, since the condition (i) is too stringent. We shall use another form of the theorem given in \cite[Theorem 2]{Burton}. Implicitly, the construction given in \cite{Burton} was earlier used by O'Regan \cite{OR}, from whom the required theorem was not formulated. Then Burton's theorem has been further developed in the various studies.

The formulation of Burton's theorem is as follows.

\begin{thm} (Burton \cite{Burton})\label{T5}
Let $\mathcal{M}$ be a closed, convex, and nonempty subset of a Banach space ($S, \|\cdot\|$). Suppose that $A: \mathcal{M}\to S$ and $B: S\to S$ such that
\begin{enumerate}
\item [(i)] $B$ is a contraction with constant $\alpha<1$,
\item [(ii)] $A$ is continuous and $A\mathcal{M}$ is contained in a compact subset of $S$,
\item [(iii)] $[\boldsymbol{x}=B\boldsymbol{x}+A\boldsymbol{y}, \boldsymbol{y}\in\mathcal{M}]$ implies $\boldsymbol{x}\in\mathcal{M}$
\end{enumerate}
Then there is a $\boldsymbol{y}\in\mathcal{M}$ with $A\boldsymbol{y}+B\boldsymbol{y}=\boldsymbol{y}$.
\end{thm}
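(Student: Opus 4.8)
The plan is to collapse the two-variable problem $\boldsymbol{y}=A\boldsymbol{y}+B\boldsymbol{y}$ into an ordinary single-map fixed point problem and then invoke Schauder's theorem. Fix $\boldsymbol{y}\in\mathcal{M}$ and consider the map $\Phi_{\boldsymbol{y}}\colon S\to S$ given by $\Phi_{\boldsymbol{y}}(\boldsymbol{x})=B\boldsymbol{x}+A\boldsymbol{y}$. Since $A\boldsymbol{y}$ is a fixed element of $S$ and $B$ is a contraction with constant $\alpha<1$ by hypothesis (i), the map $\Phi_{\boldsymbol{y}}$ is itself a contraction on the complete space $S$, because $\|\Phi_{\boldsymbol{y}}(\boldsymbol{x}_1)-\Phi_{\boldsymbol{y}}(\boldsymbol{x}_2)\|=\|B\boldsymbol{x}_1-B\boldsymbol{x}_2\|\le\alpha\|\boldsymbol{x}_1-\boldsymbol{x}_2\|$. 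By the Banach contraction principle it has a unique fixed point, which I denote $P\boldsymbol{y}$, so that $P\boldsymbol{y}=B(P\boldsymbol{y})+A\boldsymbol{y}$. Hypothesis (iii), applied with $\boldsymbol{x}=P\boldsymbol{y}$, then shows $P\boldsymbol{y}\in\mathcal{M}$, so $P$ is a well-defined self-map $P\colon\mathcal{M}\to\mathcal{M}$. Its fixed points are exactly the solutions we seek, since $P\boldsymbol{y}=\boldsymbol{y}$ forces $\boldsymbol{y}=B\boldsymbol{y}+A\boldsymbol{y}$.

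Next I would establish continuity of $P$. The key structural observation is that $P\boldsymbol{y}$ depends on $\boldsymbol{y}$ only through the element $A\boldsymbol{y}$, so $P=Q\circ A$, where $Q\colon S\to S$ sends $\boldsymbol{u}$ to the unique fixed point of $\boldsymbol{x}\mapsto B\boldsymbol{x}+\boldsymbol{u}$. Subtracting the two defining identities and using the contraction bound for $B$ gives
\[
\|Q\boldsymbol{u}_1-Q\boldsymbol{u}_2\|\le\alpha\|Q\boldsymbol{u}_1-Q\boldsymbol{u}_2\|+\|\boldsymbol{u}_1-\boldsymbol{u}_2\|,
\]
whence $\|Q\boldsymbol{u}_1-Q\boldsymbol{u}_2\|\le(1-\alpha)^{-1}\|\boldsymbol{u}_1-\boldsymbol{u}_2\|$. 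Thus $Q$ is Lipschitz, hence continuous, and since $A$ is continuous by hypothesis (ii), the composition $P=Q\circ A$ is continuous on $\mathcal{M}$.

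The compactness step then follows from the same factorization. By hypothesis (ii) the set $A\mathcal{M}$ lies in a compact subset $K\subseteq S$, and since $Q$ is continuous, $Q(K)$ is compact; therefore $P(\mathcal{M})=Q(A\mathcal{M})\subseteq Q(K)$ is relatively compact. I would then pass to the set $K_0=\overline{\mathrm{conv}}\,(P(\mathcal{M}))$, which is compact by Mazur's theorem (the closed convex hull of a relatively compact set in a Banach space is compact), and which satisfies $K_0\subseteq\mathcal{M}$ because $\mathcal{M}$ is closed, convex, and contains $P(\mathcal{M})$. Consequently $P(K_0)\subseteq P(\mathcal{M})\subseteq K_0$, so $P$ restricts to a continuous self-map of the compact convex set $K_0$, and Schauder's fixed point theorem produces a point $\boldsymbol{y}\in K_0\subseteq\mathcal{M}$ with $P\boldsymbol{y}=\boldsymbol{y}$, that is, $A\boldsymbol{y}+B\boldsymbol{y}=\boldsymbol{y}$, as required.

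The only genuine obstacle is meeting the compactness requirement of Schauder's theorem: $P$ is manufactured from a contraction and does not improve compactness on its own, so one must notice that it factors through $A$, which transfers the compactness assumed of $A\mathcal{M}$ directly to $P(\mathcal{M})$. The remaining verifications—that $\Phi_{\boldsymbol{y}}$ is a contraction, that $P$ is Lipschitz, and that $K_0$ lies in $\mathcal{M}$—are routine once this observation is in place. I note finally that $B$ is not assumed linear, so the invertibility of $I-B$ that one would use in the linear case is replaced throughout by the contraction-principle construction of $P$ and $Q$.
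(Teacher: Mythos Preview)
The paper does not prove this theorem; it is quoted from Burton \cite{Burton} and used as a tool. Your argument is correct and is essentially Burton's own proof: solve $\boldsymbol{x}=B\boldsymbol{x}+A\boldsymbol{y}$ for each $\boldsymbol{y}$ via the contraction principle to define a self-map $P=(I-B)^{-1}A$ of $\mathcal{M}$, check that $P$ is continuous with relatively compact range because it factors through $A$, and apply Schauder. The Lipschitz bound $\|Q\boldsymbol{u}_1-Q\boldsymbol{u}_2\|\le(1-\alpha)^{-1}\|\boldsymbol{u}_1-\boldsymbol{u}_2\|$ and the Mazur step are exactly what is needed, and your remark that linearity of $B$ is unnecessary is the point of the construction.
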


The rest of this paper is structured in two sections. In Section \ref{S2}, we prove the main result for infinite Toeplitz matrices. In Section \ref{S3}, the result is extended to general matrices with strictly positive entries. In Section \ref{S4} we conclude the paper.

\section{Fixed point theorem for infinite Toeplitz matrix}\label{S2}
We prove the following theorem.

\begin{thm}\label{T3}
For the fixed point equation defined by \eqref{7} and \eqref{8} assume that $\sum_{i=0}^{\infty}t_i< 1$ and $\sum_{i=-\infty}^{\infty}t_i<\infty$. Then the fixed point equation \eqref{7} has a positive solution.
\end{thm}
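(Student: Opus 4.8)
The plan is to realize a positive solution of \eqref{7} as a fixed point obtained from Burton's theorem (Theorem~\ref{T5}), by splitting the Toeplitz operator $T$ into a ``contractive lower-triangular part'' $B$ and a ``compact upper-triangular part'' $A$. Concretely, write $T\boldsymbol{x} = B\boldsymbol{x} + A\boldsymbol{x}$, where $B$ collects the entries $t_i$ with $i\ge 0$ (the part acting on the coordinates $x_j, x_{j-1},\dots,x_0$ in row $j$) and $A$ collects the entries $t_{-i}$ with $i\ge 1$ (the part acting on the ``tail'' coordinates $x_{j+1},x_{j+2},\dots$). The hypothesis $\sum_{i=0}^\infty t_i<1$ is exactly what makes $B$ a contraction: on a suitable sequence space (I would work in $\ell^1$, or perhaps the space $c_0$ of null sequences, with the sup norm, depending on which makes the compactness step cleanest), $\|B\boldsymbol{x}-B\boldsymbol{y}\| \le \big(\sum_{i=0}^\infty t_i\big)\|\boldsymbol{x}-\boldsymbol{y}\|$, so (i) holds with $\alpha=\sum_{i=0}^\infty t_i<1$. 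The condition $\sum_{i=-\infty}^\infty t_i<\infty$ guarantees $T$, hence $A$, is a bounded operator, and in fact $\sum_{i=1}^\infty t_{-i}<\infty$ will drive the compactness of $A$.

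The set $\mathcal{M}$ must be chosen so that conditions (ii) and (iii) of Theorem~\ref{T5} hold, and so that the resulting fixed point is genuinely positive (not the zero vector). Because of comment (C4), any nonzero solution can be rescaled, so it is natural to pin down the solution by fixing the first coordinate, e.g. $x_0 = 1$ (or $x_0$ equal to some fixed positive constant). I would take $\mathcal{M}$ to be a set of the form $\{\boldsymbol{x}: x_i\ge 0,\ x_0=1,\ x_i \le M \rho^{i}\ \text{for all } i\}$ for appropriately chosen constants $M\ge 1$ and $\rho\in(0,1)$ — a closed, convex, nonempty subset of the chosen Banach space. The geometric majorant serves two purposes: it makes $\mathcal{M}$ lie in a compact subset (for the sup-norm on $c_0$, pointwise-bounded sequences with a common summable envelope are relatively compact; for $\ell^1$ one uses the standard criterion that a uniform tail bound plus boundedness gives relative compactness), and it is preserved under the map in condition (iii). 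Verifying (iii) is where the real work is: assuming $\boldsymbol{x} = B\boldsymbol{x} + A\boldsymbol{y}$ with $\boldsymbol{y}\in\mathcal{M}$, I must show $\boldsymbol{x}\in\mathcal{M}$, i.e. that $x$ is nonnegative, has $x_0=1$, and satisfies the same geometric bound. Nonnegativity is immediate since all entries of $B,A$ are nonnegative and $\boldsymbol{y}\ge 0$; the normalization $x_0=1$ follows from reading off the $j=0$ equation, $x_0 = t_0 x_0 + \sum_{i=1}^\infty t_{-i} y_i$, which forces a compatibility relation — here I may need to absorb a constant or set up $\mathcal{M}$ slightly differently (e.g. fix the value $\sum_{i=1}^\infty t_{-i}y_i$ is controlled) so that solving this scalar equation keeps us in $\mathcal{M}$. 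The geometric decay bound for $\boldsymbol{x}$ should follow by induction on $j$ from the recurrence, choosing $\rho$ close enough to $1$ and $M$ large enough, exploiting $\sum_{i\ge 0}t_i<1$ so that the ``self-interaction'' term $(1-t_0)^{-1}$ times the convolution tails can be dominated.

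Once $\mathcal{M}$ is set up correctly, the remaining verifications are routine: $B:\mathcal{S}\to\mathcal{S}$ is a contraction (already noted), $A$ is continuous (it is a bounded linear operator) and $A\mathcal{M}$ sits inside a compact set because $A\mathcal{M}$ inherits a summable envelope from the geometric bound on $\mathcal{M}$ together with the summability $\sum_{i\ge1}t_{-i}<\infty$ — in fact $A$ maps the bounded set $\mathcal{M}$ into a set with uniformly small tails, which is the compactness criterion. Burton's theorem then yields $\boldsymbol{y}\in\mathcal{M}$ with $A\boldsymbol{y}+B\boldsymbol{y}=\boldsymbol{y}$, that is, $T\boldsymbol{y}=\boldsymbol{y}$; and since $y_0=1>0$, this $\boldsymbol{y}$ is a positive solution in the sense defined in Section~\ref{S1} (indeed, by comment (C5), every $y_i>0$). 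The main obstacle I anticipate is the simultaneous calibration in step two: choosing the Banach space, the normalization, and the constants $M,\rho$ defining $\mathcal{M}$ so that condition (iii) of Theorem~\ref{T5} holds \emph{on the nose} — in particular reconciling the fixed-coordinate normalization with the scalar equation coming from row $j=0$ — while simultaneously keeping $\mathcal{M}$ relatively compact. Everything else (the contraction estimate, continuity, compactness of $A\mathcal{M}$) is comparatively mechanical once the geometric envelope is in place.
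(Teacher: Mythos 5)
Your decomposition $T=B+A$ into the lower-triangular part (a contraction because $\sum_{i\ge0}t_i<1$) and the strictly upper-triangular part is exactly the splitting $T=T_1+T_2$ used in the paper, and your compactness criterion (uniformly small tails in $\ell^1$) is the one invoked there as well. But there is a structural gap: a Burton/Krasnosel'skii argument in $\ell^1$ (or $c_0$) can only produce a fixed point lying in that space, and when $\sum_{i=-\infty}^{\infty}t_i\le 1$ no positive solution can lie there. Indeed, if $\sum_{i=-\infty}^{\infty}t_i<1$ then $T$ itself is a strict contraction on $\ell^1$ whose only fixed point is $\boldsymbol{0}$, and Theorem \ref{T1}(iii) says any positive solution is unbounded. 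The paper accordingly splits the proof into two regimes: for $\|T\|\le1$ it abandons fixed point theory altogether and builds the solution coordinate by coordinate from the recurrence \eqref{0} (showing each successive $x_{n+k}$ remains positive, first when $\max\{j:t_{-j}>0\}<\infty$ and then passing to the limit), and only for $1<\|T\|<\infty$ --- where Corollary \ref{C1} guarantees that any positive solution is summable --- does it apply Theorem \ref{T5}. Your proposal addresses only this second regime.

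Within that regime there is a second gap, which you flag yourself but do not close: condition (iii). With $\mathcal{M}=\{\boldsymbol{x}: x_i\ge0,\ x_0=1,\ x_i\le M\rho^i\}$, the $j=0$ row of $\boldsymbol{x}=B\boldsymbol{x}+A\boldsymbol{y}$ forces $x_0=(1-t_0)^{-1}\sum_{i\ge1}t_{-i}y_i$, which varies with $\boldsymbol{y}$ and will not equal $1$; moreover nothing in the hypotheses supplies a geometric envelope $x_i\le M\rho^i$ for solutions --- Corollary \ref{C1} yields only summability, not geometric decay, so the induction you sketch for preserving the envelope has no evident starting point. The paper instead takes $\mathcal{M}=\{\boldsymbol{m}>0:\ \|\boldsymbol{m}\|=1\}$ in $\ell^1$, which is convex on the positive cone because the $\ell^1$-norm is additive there, and argues (iii) via the scale-invariance of the solution set (Comments (C4)--(C6)). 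In short, the architecture of your second half matches the paper's, but as written the proposal omits the entire $\|T\|\le1$ case and leaves the decisive verification of condition (iii) open.
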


\begin{proof}
We define the norm of the infinite matrix $T$ with nonnegative entries based on the known norms for finite-dimensional matrices given in \cite[pages 294--295]{HJ}. Specifically,
\[
\|T\|=\sup_{k\geq0}\sum_{i=-\infty}^{k}t_{i}=\lim_{k\to\infty}\sum_{i=-\infty}^{k}t_{i}=\sum_{i=-\infty}^{\infty}t_{i},
\]
where the index value $k\geq0$ is associated with the $k+1$st row of the matrix.

The Banach space $(S, \|\cdot\|)$ we deal with is the set of all absolutely convergent series of real numbers with $\ell^1$ norm, i.e. it is the set of sequences $\boldsymbol{s}=\{s_0, s_1,\ldots\}$ obeying $\sum_{i=0}^{\infty}|s_i|<\infty$. 
 The norm of a vector $\boldsymbol{x}$ is $\|\boldsymbol{x}\|=\sum_{i=0}^{\infty}|x_i|$.

Consider first the case $\|T\|\leq1$. Assume first that $\max\{n: t_{-n}>0\}<\infty$ (i.e we first consider equation \eqref{7} with the matrix $T$ given by \eqref{9}). The existence of the solution of the fixed point equation considered in \cite{A}, and the conclusion about it was based on a wrong derivation. The corrected version of the proof is given below.

Consider first the case $n=1$, in which $x_0$ is an arbitrary positive value. We have
\[
x_1=\frac{x_0}{t_{-1}}\geq x_0, \quad x_2=\frac{(1-t_0)x_1-t_1x_0}{t_{-1}}\geq\frac{(1-t_0-t_1)x_1}{t_{-1}}\geq x_0.
\]
By induction, we arrive at the inequality
\[
x_{N+1}\geq\frac{(1-t_0-t_1-\ldots-t_N)x_N}{t_{-1}}.
\]
So, $x_N\leq x_{N+1}$ for all $N\geq0$.

In the case $n\geq2$, the sequence $x_N$ is no longer monotone increasing. Nevertheless, a positive solution of \eqref{7} exists. Indeed, consider the first equality of \eqref{0} when $j=0$:
\[
(1-t_0)x_0=t_{-1}x_1+t_{-2}x_2+\ldots+t_{-n}x_n.
\]
Setting $x_0=x_1=\ldots=x_{n-1}>0$ and using the notation $x_{n-1}^*=x_{n-1}$ we obtain
\[
x_n=\frac{(1-t_0-t_{-1}-\ldots-t_{-n+1})x_{n-1}^*}{t_{-n}}
\]
giving us $x_n\geq x_{n-1}^*$.

From the second equality of \eqref{0} when $j=1$ we have
\[
(1-t_0)x_1=t_1x_0+t_{-1}x_2+\ldots+t_{-n}x_{n+1}.
\]
It yields
\[
x_{n+1}=\frac{(1-t_1-t_0-t_{-1}-\ldots-t_{-n+2})x_{n-1}^*+t_{-n+1}x_n}{t_{-n}}
\]
Denoting $(1-t_1-t_0-t_{-1}-\ldots-t_{-n+2})x_{n-1}+t_{-n+1}x_n=x_{n}^*(1-t_1-t_0-t_{-1}-\ldots-t_{-n+1})$, we obtain
\[
x_{n+1}=\frac{(1-t_1-t_0-t_{-1}-\ldots-t_{-n+1})x_n^*}{t_{-n}}
\]
giving us $x_{n+1}\geq x_{n}^*$. The procedure continues, and for $x_{n+k}$ we obtain
\[
x_{n+k}=\frac{(1-t_k-t_{k-1}-\ldots-t_{-n+1})x_{n+k-1}^*}{t_{-n}}
\]
giving us $x_{n+k}\geq x_{n+k-1}^*$.
It follows from this procedure that all $x_n$, $n=0,1,\ldots$ are positive, which means that a positive solution of \eqref{7} exists.

Assume now that $\max\{n: t_{-n}>0\}<\infty$ is not satisfied. Now we prove that under the assumption $\sum_{i=-\infty}^{\infty}t_i\leq1$ a positive solution exists. Assume first that $\max\{n: t_{-n}>0\}=N$, where $N$ is a sufficiently large integer number. Following the arguments provided above, under this assumption a positive solution of \eqref{7} exists. Due to Comment (C4), it can be reckoned that the first entry of a solution $x_0$ is fixed and, say, equal to $1$. Then in a series of operator equations under different values of parameter $N$, the first entry $x_0$ of the vector of a positive solution can be set to $1$. From this we arrive at the conclusion that the first entry $x_0$ of the limiting solution, as $N\to\infty$, is equal to $1$ as well, and therefore this limiting solution is positive. Hence, in the case when $\max\{n: t_{-n}>0\}<\infty$ is not satisfied the existence of a positive solution is proved. Note that according to Comment (C6) all entries of a positive solution vector must be positive. Therefore all other entries $x_1$, $x_2$,\ldots of the vector $\boldsymbol{x}$ are positive too.

\smallskip
So, the rest of the proof is provided under the assumption that $1<\|T\|<\infty$.
From Corollary \ref{C1} we know that any positive solution $\boldsymbol{x}$ (if exists) satisfies the property $\lim_{i\to\infty}x_i=0$. From the proof of that theorem we have $\sum_{i=0}^{\infty}x_i<\infty$.
Based on this, in order to apply Theorem \ref{T5} we use the following construction. We represent the matrix $T$ as a sum $T_1+T_2$ of two matrices $T_1$ and $T_2$ defined by
\[
T_1=\left(\begin{array}{ccccc}t_0 &0 &0 &0 &\cdots\\
t_1 &t_0 &0 &0 &\cdots\\
t_2 &t_1 &t_0 &0 &\cdots\\
\vdots &\vdots &\vdots &\vdots &\ddots\end{array}\right),
\]
\[
 T_2=\left(\begin{array}{ccccccc}0 &t_{-1} &t_{-2} &\cdots &t_{-n} &t_{-n-1} &\cdots\\
0 &0 &t_{-1} &\cdots &t_{-n+1} &t_{-n} &\cdots\\
\vdots &\vdots &\vdots &\cdots &\vdots &\vdots &\ddots\end{array}
\right).
\]

Next, let $K\subset S$ be a set of all nonnegative vectors $\boldsymbol{s}$, $T_1: K\to K$, where $K$ is a cone of the aforementioned Banach space $(S, \|\cdot\|)$. We aimed to apply Theorem \ref{T5}. In our case,  $\mathcal{M}$ is the set of vectors $\boldsymbol{m}$ with positive entries having norm $\|\boldsymbol{m}\|=1$. Apparently, $\mathcal{M}$ is closed, convex and nonempty set. (The convexity follows, since for any $\boldsymbol{m}_1$, $\boldsymbol{m}_2\in\mathcal{M}$ and positive $\lambda_1$, $\lambda_2$ satisfying $\lambda_1+\lambda_2=1$, we have $\|\lambda_1\boldsymbol{m}_1+\lambda_2\boldsymbol{m}_2\|=\lambda_1\|\boldsymbol{m}_1\|+\lambda_2\|\boldsymbol{m}_2\|=1$.)
In addition, the set $T_2\mathcal{M}$ is compact. Indeed, following \cite[pp. 451--458]{Treves} a set $\mathcal{S}\subset S$ of elements having $\ell^{1}-norm$ to be compact should satisfy the following conditions. It should be bounded, closed and equismall at infinity. Recall (see \cite[pp. 451--458]{Treves}) that a set $\mathcal{S}$ is equismall at infinity, if for any $\epsilon>0$ there exists a natural number $n_{\epsilon}$ such that $\sum_{i=n_{\epsilon}}^{\infty}|s_i|<\epsilon$ for all $\boldsymbol{s}=(s_i)_{i=0}^\infty\in\mathcal{S}$. In our case, for any $\boldsymbol{m}\in\mathcal{M}$
\[
T_2\boldsymbol{m}=\left(\begin{array}{l}
\sum_{i=1}^{\infty}t_{-i}m_i\\
\sum_{i=1}^{\infty}t_{-i}m_{i+1}\\
\vdots\\
\sum_{i=1}^{\infty}t_{-i}m_{i+N}\\
\vdots
\end{array}\right).
\]
Since the set $\mathcal{M}$ consists of vectors $\boldsymbol{m}$, the entries of which are convergent positive sequences with $\|\boldsymbol{m}\|=1$, and $\sum_{i=1}^{\infty}t_{-i}<\infty$, then the problem reduces to show that for any $\epsilon>0$ and all $\boldsymbol{m}\in\mathcal{M}$ there exists $N$, such that
\[
\sum_{k=N}^{\infty}\sum_{i=1}^{\infty}t_{-i}m_{i+k}<\epsilon.
\]
The last inequality reduces to
\[
\sum_{k=N+1}^{\infty}m_{k}<\delta,
\]
for some positive small $\delta$. That is, the compactness of $T_2\mathcal{M}$ reduces to the compactness of $\mathcal{M}$.
Since $\mathcal{M}$ is known to be compact, then  $T_2\mathcal{M}$ is compact too.

Let $\boldsymbol{x}\in K$ be a vector. We have $\|T_1\boldsymbol{x}\|=\sum_{i=0}^{\infty}t_i\sum_{j=0}^{\infty}x_j=\alpha\|\boldsymbol{x}\|$, $\alpha<1$. Hence, $T_1$ is a contraction mapping.

So, (i) and (ii) of Theorem \ref{T5} are justified. Let us justify (iii).

It follows from Corollary \ref{C1} that if a positive solution of the equation \eqref{7} exists, then it must belong to $K$. According to Comment (C4) one can reckon that among the solutions of \eqref{7} there is a solution $\boldsymbol{x}^*$ obeying $\|\boldsymbol{x}^*\|=1$. Hence, an existing solution can be assumed to belong to $\mathcal{M}$. Consider the equation $\boldsymbol{x}=T_1\boldsymbol{x}+T_2\boldsymbol{y}$, $\boldsymbol{y}\in\mathcal{M}$. Show that if  $\boldsymbol{x}^{**}=\boldsymbol{x}^{**}(\boldsymbol{y})$ is a solution of this equation, then $\boldsymbol{x}^*$ is a solution of \eqref{7} belonging to $\mathcal{M}$ and vice versa. Indeed, suppose that
$\boldsymbol{x}^*-(T_1+T_2)\boldsymbol{x}^*=\boldsymbol{0}$, where $\boldsymbol{0}$ is the vector of zeros. Then, taking into account Comments (C5) and (C6), we arrive at the conclusion that there is a vector $\boldsymbol{\lambda}$ such that $\boldsymbol{y}=(\boldsymbol{\lambda}I)\boldsymbol{x}^*$, $I$ is the infinite unit matrix, and
\[
\boldsymbol{0}=(\boldsymbol{\lambda}I)\boldsymbol{x}^*-(\boldsymbol{\lambda}I)(T_1+T_2)\boldsymbol{x}^*=\boldsymbol{x}^{**}-T_1\boldsymbol{x}^{**}-T_2\boldsymbol{y}.
\]
Therefore, if $\boldsymbol{x}^*\in\mathcal{M}$, then $\boldsymbol{x}^{**}\in\mathcal{M}$. The inverse statement is also true, because of the one-to-one correspondence between $\boldsymbol{x}^*$ and $\boldsymbol{x}^{**}$ for any fixed $\boldsymbol{y}\in\mathcal{M}$.
\end{proof}

\section{Fixed point theorem for general infinity matrices}\label{S3}

Theorem \ref{T3} can be extended for any arbitrary matrix $T$ with nonnegative entries, the entries of which for convenience of the further formulations of the main result are denoted:
\[
T=\left(\begin{array}{cccc}t_0^{(1)} &t_{-1}^{(1)} &t_{-2}^{(1)} &\cdots\\
t_1^{(2)} &t_0^{(2)} &t_{-1}^{(2)} &\cdots\\
t_2^{(3)} &t_1^{(3)} &t_0^{(3)} &\cdots\\
\vdots &\vdots &\vdots &\ddots
\end{array}\right).
\]

For this matrix, the theorem below covers the case $\|T\|>1$ only.

\begin{thm}\label{T4}
Assume that $t_i^{(j)}>0$ for all $i=\ldots, -1,0,1,\ldots$ and $j=1,2,\ldots$.
Assume also that
\begin{align}
&\sum_{i=0}^{\infty}\sup_{k}t_{i}^{(k+1)}<1,\label{1}\\
&\sup_{k}\sum_{i=1}^{\infty}t_{-i}^{(k)}<\infty,\label{2}\\
&\liminf_{k\to\infty}\left[\sum_{i=0}^{k}t_{i}^{(k+1)}+\sum_{i=1}^{\infty}t_{-i}^{(k+1)}\right]>1,\label{3}
\end{align}
Then the equation $\boldsymbol{x}=T\boldsymbol{x}$ has a positive solution satisfying the property $\|\boldsymbol{x}\|<\infty$.
\end{thm}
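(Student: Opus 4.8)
The plan is to reproduce, in the more general setting, the scheme used for the case $1<\|T\|<\infty$ in the proof of Theorem~\ref{T3}: conditions \eqref{1}, \eqref{2} and \eqref{3} here play the roles that $\sum_{i=0}^{\infty}t_i<1$, $\sum_{i=-\infty}^{\infty}t_i<\infty$ and $\|T\|>1$ played there. I keep the same Banach space $(S,\|\cdot\|)$ of absolutely summable sequences, its cone $K$ of nonnegative vectors, and I take $\mathcal{M}\subset K$ to be the set of vectors with positive entries and norm~$1$, which is closed, convex and nonempty exactly as in Section~\ref{S2}. Write $T=T_1+T_2$, where $T_1$ carries the entries $t_i^{(j)}$ with $i\ge0$ (the main diagonal together with the strict lower triangle) and $T_2$ carries the entries $t_{-i}^{(j)}$ with $i\ge1$ (the strict upper triangle), so that $T_1:K\to K$ and $T_2:S\to S$.

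First I would establish the analogue of Corollary~\ref{C1}: if $\boldsymbol{x}=T\boldsymbol{x}$ has a positive solution $\boldsymbol{x}$, then $\sum_{i=0}^{\infty}x_i<\infty$ and $x_i\to0$. Writing the $j$-th equation of the system in the form
\[
0=\bigl(t_0^{(j+1)}-1\bigr)x_j+\sum_{i=1}^{j}t_i^{(j+1)}x_{j-i}+\sum_{i=1}^{\infty}t_{-i}^{(j+1)}x_{j+i},\qquad j=0,1,\ldots
\]
(the analogue of \eqref{0}; the first term is negative because $t_0^{(j+1)}<1$ by \eqref{1}), I would sum these identities over $j=0,\ldots,M$, interchange the orders of summation in the two double sums, and read off the coefficient multiplying the partial sum $\sum_{l=0}^{M}x_l$. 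Condition \eqref{3} guarantees that the mass of entries of $T$ accumulated at every sufficiently late index exceeds $1$ by a fixed amount, so that this coefficient is eventually bounded below by a constant $\delta>0$, while the remaining terms are of a boundary nature and, thanks to \eqref{1}, cannot absorb a divergent series. As in comment~(C2) and \cite{A} this forces $\sum_{i=0}^{\infty}x_i<\infty$, whence also $x_i\to0$. Consequently any positive solution lies in $K$ and, being determined only up to a positive multiple by homogeneity (cf.\ comment~(C4)), may be assumed to lie in $\mathcal{M}$.

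Next I would verify the hypotheses of Burton's theorem (Theorem~\ref{T5}) for the operators $A=T_2$ and $B=T_1$ on $\mathcal{M}$. For $\boldsymbol{x}\in K$ the $c$-th column sum of $T_1$ equals $\sum_{i=0}^{\infty}t_i^{(c+i+1)}\le\alpha$, where $\alpha:=\sum_{i=0}^{\infty}\sup_{k}t_i^{(k+1)}<1$ by \eqref{1}; hence $\|T_1\boldsymbol{x}\|\le\alpha\|\boldsymbol{x}\|$, so $T_1$ is a contraction and (i) holds. For (ii), $T_2$ is clearly continuous on $\mathcal{M}$, and $T_2\mathcal{M}$ is compact by the criterion of \cite[pp.~451--458]{Treves}: writing $(T_2\boldsymbol{m})_k=\sum_{i=1}^{\infty}t_{-i}^{(k+1)}m_{k+i}$ and using \eqref{2}, the set $T_2\mathcal{M}$ is bounded and closed, and its tails are dominated, uniformly in $\boldsymbol{m}\in\mathcal{M}$, by a fixed multiple of the tails of $\boldsymbol{m}$, so that its equismallness at infinity reduces, as in Section~\ref{S2}, to that of $\mathcal{M}$. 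For (iii), suppose $\boldsymbol{x}=T_1\boldsymbol{x}+T_2\boldsymbol{y}$ with $\boldsymbol{y}\in\mathcal{M}$; since all $t_i^{(j)}>0$, comments~(C5)--(C6) apply, and the one-to-one correspondence argument from the end of the proof of Theorem~\ref{T3} shows that $\boldsymbol{x}$ is a positive scalar multiple of a genuine solution of $\boldsymbol{x}=T\boldsymbol{x}$, which can be normalised into $\mathcal{M}$; thus $\boldsymbol{x}\in\mathcal{M}$. Burton's theorem then produces $\boldsymbol{y}\in\mathcal{M}$ with $T_1\boldsymbol{y}+T_2\boldsymbol{y}=\boldsymbol{y}$, i.e.\ $T\boldsymbol{y}=\boldsymbol{y}$; this $\boldsymbol{y}$ has positive entries and $\|\boldsymbol{y}\|=1<\infty$, and is the required positive solution.

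I expect the a priori summability step to be the real difficulty. In the Toeplitz case the two double sums telescope transparently and the boundary correction is visibly a finite combination of the first few $x_l$; in the general setting the entries depend on the row, the telescoping is only approximate, and one must argue with care that the leftover boundary terms are $o\bigl(\sum_{l\le M}x_l\bigr)$ whenever this partial sum diverges --- it is precisely here that the $\liminf$ in \eqref{3}, rather than a bare inequality $\|T\|>1$, is needed. A secondary, lighter point is hypothesis~(iii) of Burton's theorem: one must check that for fixed $\boldsymbol{y}\in\mathcal{M}$ the equation $\boldsymbol{x}=T_1\boldsymbol{x}+T_2\boldsymbol{y}$ has a unique solution and that it is a rescaling of a solution of $\boldsymbol{x}=T\boldsymbol{x}$, which again rests on the strict positivity of all entries via comments~(C5)--(C6).
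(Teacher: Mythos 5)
Your overall architecture coincides with the paper's: the same splitting $T=T_1+T_2$ into the lower-triangular and strictly upper-triangular parts, the same space, cone and set $\mathcal{M}$, the same verification that \eqref{1} makes $T_1$ a contraction and \eqref{2} makes $T_2\mathcal{M}$ compact via equismallness at infinity, and the same use of Burton's Theorem \ref{T5} with hypothesis (iii) checked through Comments (C5)--(C6). Where you genuinely diverge is the a priori bound $\sum_{i=0}^{\infty}x_i<\infty$ (the analogue of Corollary \ref{C1}, the paper's claim (*)), which is also the step you yourself single out as the real difficulty --- and your substitute argument has a concrete gap. Summing the coordinate identities over $j=0,\dots,M$ and interchanging the order of summation gives $\sum_{j=0}^{M}x_j=\sum_{c=0}^{\infty}x_c\,\sigma_c^{(M)}$, where $\sigma_c^{(M)}=\sum_{j=0}^{M}t_{j-c}^{(j+1)}$ is a partial \emph{column} sum of $T$. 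Hypothesis \eqref{3}, however, bounds \emph{row} sums away from $1$. In the Toeplitz setting of Comment (C2) row and column sums coincide asymptotically, which is why the telescoping argument works there; for a general matrix they need not be related (all the excess row mass of row $j+1$ may sit in entries $t_{-i}^{(j+1)}$ with $i$ growing rapidly in $j$, leaving every column sum below $1$ while \eqref{3} still holds). So your assertion that condition \eqref{3} forces the coefficient of $x_c$ to be eventually bounded below by $1+\delta$ does not follow from the stated hypotheses, and no boundary-term analysis can rescue it.

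The paper takes a different route precisely so that \eqref{3} acts on row sums: for each large $j$ it freezes row $j+1$ and forms the Toeplitz matrix $T^{(j)}$ (and its infinite extension \eqref{10}) generated by that single row, applies Theorem \ref{T1}(i) and Comment (C2) to get summability for those auxiliary Toeplitz equations, and then argues that the solutions of the finite sections $\tilde T^{(j)}$ of the true matrix inherit the bound $\limsup_{j\to\infty}\|\tilde{\boldsymbol{x}}_j\|<\infty$ and approach a solution of $\boldsymbol{x}=T\boldsymbol{x}$. If you wish to keep your direct summation scheme you would need an additional hypothesis tying column sums to row sums; otherwise you should switch to the paper's row-by-row Toeplitz comparison. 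The remainder of your write-up --- the contraction estimate via column sums of $T_1$, the compactness reduction for $T_2\mathcal{M}$, and the check of Burton's condition (iii) --- matches the paper's proof essentially verbatim.
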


\begin{proof}
To start the proof let us first discuss our assumptions. The assumption that the terms $t_i^{(j)}$ all are strictly positive, $i=\ldots,-1,0,1,\ldots$, $j=1,2,\ldots$, excludes the situation when any positive solution is impossible because of inconsistency of the left- and right-hand sides of the equations for the coordinates of the vector. For instance, if \eqref{1} is satisfied but $t_{-i}^{(j)}=0$ for all $i=1,2,\ldots$, then from the first equation we have $t_0^{(1)}x_0=x_0$ or $x_0=0$, and from the following equations of the recursion we obtain $x_n=0$ for all $n\geq0$. That is no positive solution exists.

Assumption \eqref{1} is a basic assumption. It guarantees that the matrix
\[
T_1=\left(\begin{array}{ccccc}t_0^{(1)} &0 &0 &0 &\cdots\\
t_1^{(2)} &t_0^{(2)} &0 &0 &\cdots\\
t_2^{(3)} &t_1^{(3)} &t_0^{(3)} &0 &\cdots\\
\vdots &\vdots &\vdots &\vdots &\ddots\end{array}\right)
\]
is a contraction mapping. The last is true since $\|T_1\boldsymbol{x}\|\leq\|\boldsymbol{x}\|\sum_{i=0}^{\infty}\sup_{j}t_i^{(j)}=\alpha\|\boldsymbol{x}\|$, $\alpha<1$.
The matrix $T_1$ is similar to that was used in the proof of Theorem \ref{T3}, when the matrix $T$ was presented as $T=T_1+T_2$.

\eqref{2} keeps the norm of matrix $T$ finite. Otherwise, a positive solution does not need to exist.

\eqref{3} is an important assumption that enables us to prove the following claim (*): if a solution of the equation $\boldsymbol{x}=T\boldsymbol{x}$ exists, then it satisfies the property $\sum_{i=0}^{\infty}x_i<\infty$, where $x_0$, $x_1$,\ldots are the coordinates of that solution. The proof of (*), as the most important one, is given below.

Prove (*). Suppose that $\boldsymbol{x}=\boldsymbol{x}^*$ is a positive fixed point solution.
Since $\|T\|>1$, then $\boldsymbol{x}^*$ must belong to some `good' subset $\mathcal{M}\subset\mathbb{R}_+^\infty$. Our aim is to characterize that subset and prove that $\|\boldsymbol{x}^*\|<\infty$.

For a fixed point solution $\boldsymbol{x}^*$, the fixed point equation in the matrix-operator form is as follows:
\begin{equation}\label{5}
\boldsymbol{x}^*=\left(\begin{array}{c}x_0^*\\ x_1^*\\ \vdots\end{array}\right)=\left(\begin{array}{cccc}t_0^{(1)} &t_{-1}^{(1)} &t_{-2}^{(1)} &\cdots\\
t_1^{(2)} &t_0^{(2)} &t_{-1}^{(2)} &\cdots\\
t_2^{(3)} &t_1^{(3)} &t_0^{(3)} &\cdots\\
\vdots &\vdots &\vdots &\ddots
\end{array}\right)\left(\begin{array}{c}x_0^*\\ x_1^*\\ \vdots\end{array}\right).
\end{equation}
Then, for the $j$th coordinate of $\boldsymbol{x}^*$ its expansion is
\begin{equation}\label{4}
\begin{aligned}
0=&\underbrace{(t_0^{(j+1)}-1)x_j^*}_{\text{negative value}}+\sum_{i=1}^{j}t_i^{(j+1)}x_{j-i}^*+\sum_{i=1}^{\infty}t_{-i}^{(j+1)}x_{j+i}.
\end{aligned}
\end{equation}
Assuming that $j$ in the equation is large enough and \eqref{3} is satisfied, i.e. $\sum_{i=-\infty}^{\infty}t_i^{(j+1)}>1$ for each $j$ and not approaching 1 as $j\to\infty$, we are aimed to prove that the tail of the series
\[
x^*_{j+1}+\ldots+x^*_{j+n}+\ldots
\]
is finite, and, hence, $\|\boldsymbol{x}^*\|<\infty$. That is, the `good' subset $\mathcal{M}$ is the set of all positive sequences $x_0$, $x_1$,\ldots such that $\sum_{i=0}^{\infty}x_i<\infty$.

Indeed, under condition \eqref{3}, consider the series of Toeplitz matrices $T^{(j)}$ given by

\begin{equation*}
T^{(j)}=\left(\begin{array}{ccccc}t_0^{(j+1)} &t_{-1}^{(j+1)} &t_{-2}^{(j+1)} &\cdots &t_{-j}^{(j+1)}\\
t_1^{(j+1)} &t_0^{(j+1)} &t_{-1}^{(j+1)} &\cdots &t_{1-j}^{(j+1)}\\
t_2^{(j+1)} &t_1^{(j+1)} &t_0^{(j+1)} &\cdots &t_{2-j}^{(j+1)}\\
\vdots &\vdots &\vdots &\vdots &\vdots\\
t_j^{(j+1)} &t_{j-1}^{(j+1)} &t_{j-2}^{(j+1)} &\cdots &t_0^{(j+1)}
\end{array}\right)
\end{equation*}
with different $j\geq j_0$, where $j_0$ is large.

For each of the systems $\boldsymbol{x}=T^{(j)}\boldsymbol{x}$, $j\geq j_0$, a solution $\boldsymbol{x}_j$ exists ($\boldsymbol{x}_j$ is $(j+1)$-dimensional vector). Its coordinates are asymptotically close to the corresponding coordinates of the solution of the fixed point equation
\begin{equation}\label{10}
\boldsymbol{x}=\left(\begin{array}{cccccccc}t_0^{(j+1)} &t_{-1}^{(j+1)} &t_{-2}^{(j+1)} &\cdots &t_{-j}^{(j+1)} &0 &0  &\cdots \\
t_1^{(j+1)} &t_0^{(j+1)} &t_{-1}^{(j+1)} &\cdots &t_{1-j}^{(j+1)} &t_{-j}^{(j+1)}  &0 &\cdots \\
t_2^{(j+1)} &t_1^{(j+1)} &t_0^{(j+1)} &\cdots &t_{2-j}^{(j+1)} &t_{1-j}^{(j+1)} &t_{-j}^{(j+1)} &\cdots \\
\vdots &\vdots &\vdots &\cdots &\vdots &\vdots &\vdots &\cdots  \\
t_j^{(j+1)} &t_{j-1}^{(j+1)} &t_{j-2}^{(j+1)} &\cdots &t_0^{(j+1)} &t_{-1}^{(j+1)} &t_{-2}^{(j+1)} &\cdots \\
0 &t_j^{(j+1)} &t_{j-1}^{(j+1)} &\cdots &t_{1}^{(j+1)} &t_{0}^{(j+1)} &t_{-1}^{(j+1)} &\cdots \\
0 &0 &t_j^{(j+1}) &\cdots &t_{2}^{(j+1)} &t_{1}^{(j+1)} &t_{0}^{(j+1)} &\cdots \\
\vdots &\vdots &\vdots &\cdots &\vdots &\vdots &\vdots &\ddots  \\
\end{array}\right)\boldsymbol{x}
\end{equation}
with infinite Toeplitz matrix. The last is true, since according to Theorem \ref{T1} and Comment (C2) the solution of the last equation satisfies the property $\sum_{i=0}^{\infty}x_i<\infty$, and if we denote the vector of corresponding solution of the equation $\boldsymbol{x}=T^{(j)}\boldsymbol{x}$ by $\boldsymbol{x}^{(j)}$, then the system of equations for its entries $x_0^{(j)}$, $x_1^{(j)}$,\ldots, $x_j^{(j)}$ is
\[
\begin{aligned}
0=&(t_0^{(j+1)}-1)x_k^{(j)}+\sum_{i=1}^{k}t_i^{(j+1)}x_{k-i}^{(j)}+\sum_{i=1}^{j-k}t_{-i}^{(j+1)}x_{k+i}^{(j)}, \quad k=0,1,\ldots,j.
\end{aligned}
\]
The similar system of equations for \eqref{10} is
\[
\begin{aligned}
0=&(t_0^{(j+1)}-1)x_k+\sum_{i=1}^{k}t_i^{(j+1)}x_{k-i}+\sum_{i=1}^{j}t_{-i}^{(j+1)}x_{k+i}, \quad k=0,1,\ldots,j.
\end{aligned}
\]
It is readily seen from these systems of equations that given that $\sum_{i=0}^{\infty}x_i<\infty$, under the appropriate normalization condition we obtain $x_{i}^{(j)}\to x_i$ as $j\to\infty$.
Hence, under the assumptions given in the theorem, we have $\limsup_{j\to\infty}\sum_{i=0}^{j}x_i^{(j)}<\infty$, where $x_0^{(j)}$, $x_1^{(j)}$,\ldots, $x_j^{(j)}$ are the entries of the vector-solution of the equation $\boldsymbol{x}=T^{(j)}\boldsymbol{x}$.

We shall now show that there exists a solution $\boldsymbol{x}^{*}$ of the original equation $\boldsymbol{x}=T\boldsymbol{x}$ obeying $\|\boldsymbol{x}^{*}\|<\infty$ as well.
Indeed, the required solution can be approached by the series of solutions of the equations $\boldsymbol{x}=\tilde T^{(j)}\boldsymbol{x}$, where
\begin{equation*}
\tilde T^{(j)}=\left(\begin{array}{ccccc}t_0^{(1)} &t_{-1}^{(1)} &t_{-2}^{(1)} &\cdots &t_{-j}^{(1)}\\
t_1^{(2)} &t_0^{(2)} &t_{-1}^{(2)} &\cdots &t_{1-j}^{(2)}\\
t_2^{(3)} &t_1^{(3)} &t_0^{(3)} &\cdots &t_{2-j}^{(3)}\\
\vdots &\vdots &\vdots &\vdots &\vdots\\
t_j^{(j+1)} &t_{j-1}^{(j+1)} &t_{j-2}^{(j+1)} &\cdots &t_0^{(j+1)}
\end{array}\right),
\end{equation*}
as $j$ increases to infinity. According to the convention, a solution of the original equation $\boldsymbol{x}=T\boldsymbol{x}$ exists. Therefore, for sufficiently large $j$, a solution of the equation $\boldsymbol{x}=\tilde T^{(j)}\boldsymbol{x}$ exists too, and the sequence of solutions $\tilde{\boldsymbol{x}}_j$ as $j\to\infty$ must approach a solution of the original equation. As well, any series of solutions $\tilde{\boldsymbol{x}}_j$ of the equation $\boldsymbol{x}=\tilde T^{(j)}\boldsymbol{x}$ obeys $\limsup_{j\to\infty}\|\tilde{\boldsymbol{x}}_j\|<\infty$, since the asymptotic behavior of the solution $\tilde{\boldsymbol{x}}_j$ is specified by the similar asymptotic behavior of the solutions $\boldsymbol{x}_j$ for large $j$. The required statement follows.

As mentioned above, under assumption \eqref{1} $T_1: K\to K$ is a contraction mapping.
The map $T_2: \mathcal{M}\to K$, being linear, is continuous. As in the proof of Theorem \ref{T3}, $\mathcal{M}$ is the set of all positive convergent sequences $\boldsymbol{m}$ obeying $\|\boldsymbol{m}\|=1$.
According to an assumption of the theorem, $t_{i}^{j}>0$ for all $i=\ldots,-1,0,1,\ldots$, $j=1,2,\ldots$. Therefore, it follows from presentations \eqref{5} and \eqref{4}, any positive solution $\boldsymbol{x}^*$ satisfies the property $x_j^*>0$, $j=0, 1,\ldots$. This property is an analogue of the properties given in Comments (C5) and (C6) that have been used in the proof of Theorem \ref{T3}. Hence, statement (iii) of Theorem \ref{T5} is justified similarly to that in the proof of Theorem \ref{T3}.
Thus we arrive at the conclusion that a positive solution of the fixed point equation $\boldsymbol{x}=T\boldsymbol{x}$ exists.
\end{proof}

\section{Concluding remarks}\label{S4}
The circle of research problems of the present paper and the earlier one \cite{A} of the author were originated from the applied problems of the theories of stochastic processes and applied probability mentioned in the introduction, where the certain recurrence relations of convolution type have been considered. The perspectives of the future research seem to be in further applications of the results obtained in these two papers to advanced telecommunication systems, that in turn may initiate novel studies of operator equations.

\section*{Statements and declarations}

No conflict of interest was reported by the author. There was no financial support for this research.

\subsection*{Acknowledgement}
The author thanks the reviewers for careful reading and relevant comments. As well, the author expresses his gratitude to M. Yumagulov, A. Mikhailov and all other people, who made critical comments officially or privately.

\end{document}